\documentclass{article}

\usepackage[
	a4paper,
	margin=25mm
]{geometry}
\usepackage{setspace}
\usepackage{sectsty} 
\usepackage{titlesec}

\usepackage{amsmath, amssymb, amsthm}
\usepackage{mathtools}

\usepackage[
	setpagesize=false,
	colorlinks=true,
	linkcolor=blue,
        citecolor=blue,
        urlcolor=black,
	pdfencoding=auto,
	psdextra,
]{hyperref}
\usepackage{cleveref}

\usepackage{etoolbox} 
\usepackage[shortlabels]{enumitem}
\usepackage{xparse} 

\usepackage{graphicx}
\usepackage[dvipsnames]{xcolor}
\usepackage{tikz}

\usepackage{todonotes}
\usepackage[
    deletedmarkup=sout,
    commentmarkup=uwave,
    authormarkuptext=name
]{changes}

\usepackage{comment}

\onehalfspacing

\setlist[enumerate,1]{label=(\arabic*), ref=(\arabic*)}
\setlist[enumerate,3]{label=(\roman*), ref=(\roman*)}

\allsectionsfont{\boldmath} 

\titlelabel{\thetitle.\quad}

\theoremstyle{plain}
\newtheorem{theorem}{Theorem}[section]
\newtheorem{lemma}[theorem]{Lemma}

\newtheorem{problem}[theorem]{Problem}

\newtheorem{claim}{Claim}[theorem]
\newtheorem*{claim*}{Claim}

\makeatletter
\newenvironment{claimproof}[1][Proof]{\par
	\pushQED{\qed}%
	
	\normalfont \topsep6\p@\@plus6\p@\relax
	\trivlist
	\item[\hskip\labelsep
	\textit{#1}\@addpunct{.}~]\ignorespaces
}{%
	\popQED\endtrivlist\@endpefalse
}
\makeatother

\newlist{Cases}{enumerate}{3}
\setlist[Cases]{parsep=0pt plus 1pt}
\setlist[Cases,1]{wide=0pt, listparindent=\parindent,
    label = \textbf{Case~\arabic*:}, ref = \arabic*}
\setlist[Cases,2]{wide=\parindent, listparindent=\parindent,
    label = \textbf{Case~\arabic{Casesi}-\arabic{Casesii}:}}

\crefname{Casesi}{case}{cases}

\newcounter{case}
\AtBeginEnvironment{proof}{\setcounter{case}{0}}

\crefname{case}{case}{cases}

\theoremstyle{definition}
\newtheorem{definition}[theorem]{Definition}

\newenvironment{proofsketch}[1][Proof sketch]{%
	\begin{proof}[#1]
}{%
	\end{proof}
}


\newcommand{\labs}{\Big\lvert}
\newcommand{\rabs}{\Big\rvert}


\makeatletter
\NewDocumentCommand{\xsideset}{mmme{_^}}{%
  \mathop{%
    \settowidth{\dimen0}{$\m@th\displaystyle#3$}%
    \dimen0=.5\dimen0
    \settowidth{\dimen2}{$%
      \m@th\displaystyle#3%
      \IfValueT{#4}{_{#4}}%
      \IfValueT{#5}{^{#5}}%
    $}%
    \dimen2=.5\dimen2
    \advance\dimen2 -\dimen0
    \sbox6{\scriptspace\z@$\displaystyle{\vphantom{#3}}#1$}
    \sbox8{\scriptspace\z@$\displaystyle{\vphantom{#3}}#2$}
    \ifdim\wd6>\dimen2 \kern\dimexpr\wd6-\dimen2\relax\fi
    {%
     \mathop{\llap{\copy6}{\displaystyle#3}\rlap{\copy8}}\limits
     \IfValueT{#4}{_{#4}}%
     \IfValueT{#5}{^{#5}}%
    }%
    \ifdim\wd8>\dimen2 \kern\dimexpr\wd8-\dimen2\relax\fi
  }%
}
\makeatother


\let\originalleft\left
\let\originalright\right
\renewcommand{\left}{\mathopen{}\mathclose\bgroup\originalleft}
\renewcommand{\right}{\aftergroup\egroup\originalright}

\makeatletter
\newcases{lrcases*}
  {\quad}
  {$\m@th{##}$\hfil}
  {{##}\hfil}
  {\lbrace}
  {\rbrace}
\makeatother

\usetikzlibrary{matrix,arrows,decorations.pathmorphing}
\usetikzlibrary{positioning}
\usetikzlibrary{graphs} 
\usetikzlibrary{graphs.standard}

\title{Dense triangle-free $(n, d, \lambda)$-graphs for all orders}

\author{
Jaehoon Kim~\thanks{Department of Mathematical Sciences, KAIST, South Korea Email:{\tt $\{$jaehoon.kim, hyunwoo.lee$\}$@kaist.ac.kr}} \and Hyunwoo Lee~\footnotemark[1]~\thanks{Extremal Combinatorics and Probability Group (ECOPRO), Institute for Basic Science (IBS).}}

\usepackage[
    backend=biber,
    sorting=nyt,
    maxbibnames=99
]{biblatex}
\addbibresource{bibfile.bib}

\begin{document}
\maketitle

\begin{abstract}
In \cite{alon1994explicit}, Alon construct a triangle-free $(n,d,\lambda)$-graph with $d = \Omega(n^{2/3})$ and $\lambda = O(d^{1/2})$ for an exponentially increasing sequence of integers $n$.
Using his ingenious construction, we deduce that there exist triangle-free $(n,d,\lambda)$-graphs with $d = \Omega(n^{2/3})$ and $\lambda = O( (d \log n)^{1/2} )$ for all sufficiently large $n$.
\end{abstract}


\section{Introduction}\label{sec:intro}

Since Erd\H{o}s and R\'{e}nyi introduced the concept of random graphs, 
Erd\H{o}s-R\'{e}nyi random graph models and their variations have been extensively studied and proven to be extremely useful. 
For example, various random graph models provide effective tools for analyzing various real-life phenomena (for such examples, see \cite[Chapter 11]
{coolen2017generating}) and they often provide good examples and bounds for extremal problems.

As Erd\H{o}s-R\'{e}nyi random graph provides highly unstructured graphs, it is natural to consider other models that produce random graphs admitting certain structures.
By imposing restrictions on the degrees of vertices, the study of random regular graphs was initiated by Bender and Canfield~\cite{bender1978asymptotic}, Bollob\'as~\cite{bollobas1980probabilistic}, and Wormald~\cite{wormald1981asymptotic,wormald1981asymptotic-b}. 
Introducing restrictions on its local structures, random regular graphs offer a model that lies on the cusp between structured and unstructured graphs. 
A model with such a local restriction is useful for various problems. For example, there is an application of random regular graphs even in biogeography in \cite{wilson1987methods}.

As these random graph models provide interesting graphs only in a non-constructive way, the need for considering a specific graph that behaves like a random graph has emerged, giving birth to the concept of pseudorandom graphs. 
The concept of pseudorandom graphs was proposed by Thomason~\cite{Th87a,Th87b} in the 1980s, and it has become a central topic in both combinatorics and theoretical computer science. Among the several notions for pseudorandomness, the most frequently used ones are the following jumbledness and bijumbledness introduced by Thomason~\cite{Th87a,Th87b}. 

\begin{definition}
    For given positive real numbers $p, \beta$, a graph $G$ is $(p, \beta)$-jumbled if $\lvert e(X) - p\binom{|X|}{2} \rvert \leq \beta |X|$ holds for every $X\subseteq V(G)$.
    It is $(p, \beta)$-bijumbled if  $\lvert e(S, T) - p|S||T| \rvert \leq \beta \sqrt{|S||T|}$ holds for every $S, T\subseteq V(G)$.
\end{definition}

Note that bijumbledness implies jumbledness, while the converse is not true. 
For more about pseudorandom graphs, we recommend an excellent survey paper by Krivelevich and Sudakov~\cite{krivelevich2006pseudo}.

An important class of pseudorandom graphs are $(n, d, \lambda)$-graphs. 
For a graph $G$ with eigenvalues $\lambda_1\geq \dots \geq \lambda_n$ of its adjacency matrix, write 
$\lambda(G)= \max\{ |\lambda_2|,|\lambda_n|\}$. 
An $(n, d, \lambda)$-graph is a $d$-regular $n$-vertex graph whose second largest absolute value of eigenvalues is $\lambda$.  
By the following famous Expander mixing lemma, an $(n,d,\lambda)$-graph is $(\frac{d}{n},\lambda)$-bijumbled, hence $(n,d,\lambda)$-graphs provide an interesting class of pseudorandom graphs.
\begin{lemma}[Expander Mixing Lemma]\label{lem:expander-mixing}
    Let $G$ be an $(n, d, \lambda)$-graph. Then 
    $\Big\lvert e(S, T) - \frac{d}{n}|S||T| \Big\rvert \leq \lambda \sqrt{|S||T|}$ holds for all $S, T\subseteq V(G)$.
\end{lemma}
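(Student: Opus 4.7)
The plan is to prove the lemma via the standard spectral argument: expand $e(S,T)$ as a quadratic form in the adjacency matrix, decompose the indicator vectors of $S$ and $T$ into a component along the all-ones direction and a component orthogonal to it, and control the off-diagonal contribution using the spectral bound $\lambda$.

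First I would write $e(S,T) = \mathbf{1}_S^{\!\top} A \mathbf{1}_T$, where $A$ is the adjacency matrix of $G$ and $\mathbf{1}_S, \mathbf{1}_T$ are indicator vectors (with the convention that edges inside $S\cap T$ are counted with multiplicity corresponding to $\mathbf{1}_S^{\!\top} A \mathbf{1}_T$; this matches the statement because $\frac{d}{n}|S||T|$ is the expected value under this same counting). Since $G$ is $d$-regular, the all-ones vector $\mathbf{1}$ is an eigenvector of $A$ with eigenvalue $d$. Decompose $\mathbf{1}_S = \frac{|S|}{n}\mathbf{1} + f_S$ and $\mathbf{1}_T = \frac{|T|}{n}\mathbf{1} + f_T$, where $f_S, f_T$ are orthogonal to $\mathbf{1}$.

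Next I would substitute these decompositions into $\mathbf{1}_S^{\!\top} A \mathbf{1}_T$ and expand. The two cross terms vanish: $\mathbf{1}^{\!\top} A f_T = d\,\mathbf{1}^{\!\top} f_T = 0$ and similarly for the other one. The diagonal term evaluates to $\frac{|S||T|}{n^2}\cdot \mathbf{1}^{\!\top} A\mathbf{1} = \frac{d|S||T|}{n}$, which is exactly the quantity being subtracted. So the task reduces to bounding $|f_S^{\!\top} A f_T|$.

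Finally, since $f_S$ and $f_T$ lie in the orthogonal complement of the $d$-eigenspace spanned by $\mathbf{1}$, and the spectral radius of $A$ on this subspace is at most $\lambda$ by definition of $\lambda(G)$, the Cauchy--Schwarz inequality applied to $f_S^{\!\top}(Af_T)$ yields $|f_S^{\!\top} A f_T| \leq \lambda \|f_S\|\,\|f_T\|$. A direct computation gives $\|f_S\|^2 = |S| - \frac{|S|^2}{n} \leq |S|$ and similarly $\|f_T\|^2 \leq |T|$, which combine to give the claimed bound $\lambda\sqrt{|S||T|}$. There is no genuine obstacle here; the only subtlety is keeping track of the convention for $e(S,T)$ when $S$ and $T$ overlap, but this does not affect the final inequality.
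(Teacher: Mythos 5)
Your proof is correct: it is the standard spectral proof of the Expander Mixing Lemma, and every step checks out (the decomposition $\mathbf{1}_S = \frac{|S|}{n}\mathbf{1} + f_S$, the vanishing cross terms, the bound $\lvert f_S^{\top} A f_T\rvert \leq \lambda \lVert f_S\rVert\,\lVert f_T\rVert$ on the orthogonal complement of $\mathbf{1}$, and the norm computation $\lVert f_S\rVert^2 = |S| - |S|^2/n \leq |S|$). The paper itself gives no proof --- it states this as a classical result --- so there is nothing to compare against; note only that your counting convention for $e(S,T)$ (pairs $s\in S$, $t\in T$ with $st\in E$, so edges inside $S\cap T$ counted twice) is exactly the convention the paper adopts in Section~2.1, so the identity $e(S,T)=\mathbf{1}_S^{\top}A\mathbf{1}_T$ is consistent with the statement as used here.
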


Moreover, through several concepts like Cayley graphs and random walks, the notion of $(n, d, \lambda)$-graph constitutes solid brides between combinatorics, algebra, geometry, and probability. For this interesting interplay, see an excellent Survey by Hoory, Linial, and Wigderson~\cite{hoory2006expander}.

While Alon-Boppana bound~\cite{nilli1991second} showed that an $(n,d,\lambda)$-graph always satisfies $\lambda =\Omega(d^{1/2})$, many interesting applications (see~\cite{hoory2006expander}) of $(n,d,\lambda)$-graph become most powerful when $\lambda$ is $O(d^{1/2})$. Thus finding $(n,d,\lambda)$-graphs with $\lambda =O(d^{1/2})$ possessing additional interesting properties is a crucial topic of study.
For example, as the random regular graphs model imposes the degree-regularity to make the graph more structured, can we impose further conditions on $(n,d,\lambda)$-graphs to ensure more structures? 

By enforcing the additional triangle-free condition, Alon \cite{alon1994explicit} constructed triangle-free $(n,d,\lambda)$-graphs with $d=\Theta(n^{2/3})$ and $\lambda=O(d^{1/2})$. 
As the expander mixing lemma implies that every $(n, d, \lambda)$-graph with $\lambda < d^2/n$ contains a triangle, his construction is as dense as it gets. 
Moreover, Alon's construction has several surprising applications in geometry, extremal graph theory as well as coding theory (see a survey by Alon~\cite{alon2020lovasz}).
Alon's construction is indeed one which lies on the cusp of structured and unstructured worlds, as a Erd\H{o}s-R\'{e}nyi random graphs $G(n,p)$ with degree $pn=\Theta(n^{2/3})$ has triangles everywhere. We formally state his theorem as below.
 
\begin{theorem}[Alon~\cite{alon1994explicit}]\label{thm:alon}
    Let $k$ be a positive integer which is not divisible by $3.$ Then there is a triangle-free $(n, d, \lambda)$-graph where $n = 2^{3k}$, $d = 2^{k-1}(2^{k-1} - 1)$, and $\lambda \leq 9\cdot 2^{k} + 3\cdot 2^{k/2} + 1/4.$
\end{theorem}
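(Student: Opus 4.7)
The plan is to realize the required graph as a Cayley graph $\mathrm{Cay}(V,S)$ on the additive group $V = \mathbb{F}_{2^k}^3$, so that the vertex set already has the right size $|V| = 2^{3k} = n$. The work then breaks cleanly into constructing a symmetric set $S \subseteq V \setminus \{0\}$ of size exactly $d = 2^{k-1}(2^{k-1}-1)$ with two analytic properties: (a) the equation $s_1+s_2+s_3 = 0$ has no solution with $s_1,s_2,s_3 \in S$, which is equivalent to triangle-freeness of $\mathrm{Cay}(V,S)$, and (b) every Fourier coefficient $\widehat{S}(\chi) = \sum_{s\in S}\chi(s)$ at a nontrivial character $\chi$ satisfies $|\widehat{S}(\chi)| \leq 9\cdot 2^k + 3\cdot 2^{k/2} + 1/4$. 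Standard Cayley-graph spectral theory then hands us the $\lambda$-bound from (b), since the eigenvalues of $\mathrm{Cay}(V,S)$ on an abelian group $V$ are precisely the numbers $\widehat{S}(\chi)$ as $\chi$ ranges over $\widehat{V}$, with the trivial character giving the Perron eigenvalue $d$.

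To design such an $S$, I would try an algebraic construction: take $S$ to be the image of a polynomial map $(a,b) \mapsto (f_1(a,b), f_2(a,b), f_3(a,b))$ from a suitable subset of $\mathbb{F}_{2^k}^2$ of size $2^{k-1}(2^{k-1}-1)$, with the $f_i$ chosen so that the sum-freeness check (a) collapses to a single polynomial identity over $\mathbb{F}_{2^k}$. The hypothesis $3 \nmid k$ should appear exactly at the step where this identity reduces to asking that the degree-$3$ extension $\mathbb{F}_{2^{3k}}/\mathbb{F}_{2^k}$ is well-behaved with respect to the $f_i$: for instance, that a certain norm or trace map has no nontrivial kernel intersecting a prescribed parameter set. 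This can fail when $3 \mid k$ because then $\mathbb{F}_{2^k}$ already contains $\mathbb{F}_8$, producing coincidental sum-free solutions that the construction cannot tolerate.

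For the Fourier bound in (b), identify characters of $V$ with elements of $\mathbb{F}_{2^k}^3$ through a fixed additive character $\psi$ of $\mathbb{F}_{2^k}$. Each $\widehat{S}(\chi)$ rewrites as an exponential sum $\sum_{(a,b)} \psi\bigl(P_\chi(a,b)\bigr)$ for an explicit polynomial $P_\chi$ of bounded degree depending on $\chi$. The Weil bound over algebraic curves, or Deligne's bound for surfaces, then gives $|\widehat{S}(\chi)| = O(2^k)$ uniformly, away from the degenerate loci where $P_\chi$ is constant, linear, or of collapsed degree. Carefully enumerating those degenerate $\chi$ and handling them by hand should yield the precise constants $9$, $3$, and $1/4$ that appear in the statement; the $3\cdot 2^{k/2}$ term in particular has the shape of the Weil square-root savings for a sum over a curve of bounded genus.

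The main obstacle is pinning down $S$ so that (a) and (b) hold simultaneously: most algebraically defined $S$ automatically satisfy (b) through Weil, but (a) is a combinatorial genericity that is usually destroyed by any hidden linear relation among the $f_i$; conversely, sum-free sets constructed by hand almost never have the algebraic regularity needed for a Weil-type eigenvalue bound. Alon's construction presumably threads this needle by choosing $S$ as a specific variety tied to the degree-$3$ extension $\mathbb{F}_{2^{3k}}/\mathbb{F}_{2^k}$, where the non-divisibility $3 \nmid k$ simultaneously rules out the bad sum-free coincidences and preserves the algebraic structure that Weil feeds on.
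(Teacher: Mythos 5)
First, a point of reference: the paper does not prove \Cref{thm:alon} at all --- it is quoted from \cite{alon1994explicit} and used as a black box --- so the only thing to measure your proposal against is Alon's original argument. Your outline is structurally faithful to it: Alon's graph is indeed a Cayley graph on the additive group $\mathbb{F}_{2^k}^3$ (of order $2^{3k}=n$), triangle-freeness is exactly the sum-freeness condition $s_1+s_2+s_3\neq 0$ in characteristic $2$, the eigenvalues are the character sums $\widehat{S}(\chi)$, and the bound on them comes from Weil-type estimates (Carlitz--Uchiyama) applied to exponential sums $\sum\psi(P_\chi)$, with the degenerate characters handled separately; this is where the explicit constants $9$, $3$ and $1/4$ arise. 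Even your guess about the hypothesis $3\nmid k$ is essentially right: it is equivalent to $\gcd(7,2^k-1)=1$, i.e.\ to $\mathbb{F}_8\not\subseteq\mathbb{F}_{2^k}$, which Alon needs so that $x\mapsto x^7$ is a bijection of $\mathbb{F}_{2^k}$.

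The genuine gap is that you never produce the object the theorem asserts to exist. The entire content of the theorem is the explicit connection set $S$: Alon builds it from a specific polynomial parametrization indexed by a pair $(w_0,w_1)$ ranging over two halves of $\mathbb{F}_{2^k}^*$ of sizes $2^{k-1}-1$ and $2^{k-1}$ (whence the degree $2^{k-1}(2^{k-1}-1)$), and then the two hard steps are (a) an algebraic verification that no three elements of $S$ sum to zero, and (b) a check that every nontrivial character turns $\widehat{S}(\chi)$ into a sum over $\mathbb{F}_{2^k}^2$ of an additive character of a polynomial of bounded, nondegenerate degree. Your proposal defers exactly these two steps (``with the $f_i$ chosen so that \dots'', ``should yield the precise constants''), and there is no soft or generic argument that supplies them: as you yourself observe, sum-freeness and Weil-amenability pull in opposite directions, and threading that needle \emph{is} the proof. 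As written, you have correctly reverse-engineered the architecture of Alon's argument from the shape of the answer, but the construction and both verifications --- which constitute all of the mathematical content --- are missing.
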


Among two local restrictions, one can give up degree-regularity while retaining triangle-freeness.
Indeed, Conlon~\cite{conlon2017sequence} found another random construction of triangle-free pseudorandom graphs as follows.

\begin{theorem}[Conlon~\cite{conlon2017sequence}]\label{thm:conlon}
    There exists a sequence of positive integers $\{n_i\}_{i = 1}^{\infty}$ such that for each $i$, there is a triangle-free $n_i$-vertex graph $G_i$ which is $(p, \beta)$-jumbled with $p = \Omega(n_i^{-1/3})$ and $\beta = O(\sqrt{pn_i}\log n_i).$
\end{theorem}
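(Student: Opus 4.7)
The plan is to construct the desired graph by starting with an Alon graph whose order is within a constant factor of $n$, performing a randomized non-uniform blow-up to land on exactly $n$ vertices, and then regularizing via a small number of edge removals.

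First, for sufficiently large $n$, observe that the Alon sizes $N = 2^{3k}$ with $k \not\equiv 0 \pmod 3$ satisfy $N_{i+1}/N_i \in \{8, 64\}$, so one can find such a $k$ with $n/C \le N \le n$ for some absolute constant $C \le 64$. By \Cref{thm:alon}, obtain a triangle-free $(N, d_0, \lambda_0)$-graph $G_0$ with $d_0 = \Theta(N^{2/3}) = \Omega(n^{2/3})$ and $\lambda_0 = O(\sqrt{d_0})$. Set $s = \lfloor n/N \rfloor \le C$ and $r = n - sN \in [0, N)$.

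Next, pick $R \subseteq V(G_0)$ uniformly at random with $|R| = r$, and form the non-uniform blow-up $G^*$: each $v \in R$ becomes an independent set of size $s+1$, each $v \notin R$ becomes an independent set of size $s$, and for every $uv \in E(G_0)$ we put a complete bipartite graph between $V_u$ and $V_v$. Then $|V(G^*)| = n$ and, since blow-ups preserve triangle-freeness, $G^*$ is triangle-free. The degree of a vertex in class $V_v$ equals $s d_0 + |R \cap N_{G_0}(v)|$. Applying a Chernoff/Hoeffding bound for hypergeometric random variables together with a union bound over $v \in V(G_0)$, I expect that with positive probability the quantities $|R \cap N_{G_0}(v)|$ all lie within $O(\sqrt{d_0 \log N})$ of the common mean $rd_0/N$, so $G^*$ is approximately regular with degree spread $O(\sqrt{d_0 \log n})$.

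To enforce exact regularity, modify $G^*$ into a $d$-regular graph $G$ by \emph{removing} edges only (so triangle-freeness is preserved automatically). The total degree excess is $O(n \sqrt{d_0 \log n})$, and these can be balanced via a matching-type argument that pairs surplus classes with adjacent surplus (or appropriately chosen) classes. To bound $\lambda(G)$, view the adjacency matrix of $G^*$ in the block form coming from the blow-up: its nonzero eigenvalues coincide with those of $D^{1/2} A D^{1/2}$, where $A$ is the adjacency matrix of $G_0$ and $D$ is the diagonal matrix of class sizes. Writing $D = sI + E$ with $E$ a rank-$r$ diagonal perturbation, Weyl's inequality combined with the concentration of $\|AE\|$ for random $R$ should give $\lambda(G^*) = O(s\lambda_0 + \sqrt{d_0 \log n}) = O(\sqrt{d \log n})$ when $d = \Theta(sd_0)$. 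The edge-removal step contributes a further perturbation whose operator norm is controlled by the number of changes.

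The main obstacle I expect is the regularization step: achieving exact $d$-regularity via a small number of edge removals, while keeping the spectral perturbation of order $O(\sqrt{d \log n})$ and preserving triangle-freeness. Handling this cleanly likely requires a careful matching-based pairing of excess classes and a concentration estimate for the perturbation's spectral norm, possibly using the semicircular bound for random sparse symmetric matrices.
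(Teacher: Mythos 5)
The first thing to say is that the paper does not prove this statement at all: it is quoted as Conlon's theorem, and Conlon's actual argument is a random construction (entirely unrelated to Alon's graphs) that is deliberately non-explicit and non-regular. So your route is genuinely different from the source. For the statement as literally written, though, your machinery is far heavier than needed: since only a \emph{sequence} $\{n_i\}$ is required and only \emph{jumbledness} (not regularity) is demanded, one may simply take $n_i=2^{3k_i}$ and let $G_i$ be Alon's graph from \Cref{thm:alon}; the expander mixing lemma (\Cref{lem:expander-mixing}) then gives $(p,\beta)$-bijumbledness with $p=d/n_i=\Theta(n_i^{-1/3})$ and $\beta=\lambda=O(\sqrt{pn_i})$, which is stronger than the claimed bound (no $\log$ factor). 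Even if you insist on hitting every order $n$, your blow-up idea already suffices without regularization: the non-uniform blow-up is an induced subgraph of the uniform $(s+1)$-fold blow-up, which is regular with second eigenvalue $(s+1)\lambda_0=O(\sqrt{pn})$, and bijumbledness passes verbatim to induced subgraphs. Jumbledness simply does not require the graph to be regular, so the entire regularization step is unnecessary for this statement.

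The regularization step is also where your sketch has real gaps, and it is worth flagging because it is precisely the content of the paper's main result, \Cref{thm:main}, rather than of this cited theorem. Deleting edges to reach an \emph{exact} common degree faces a parity obstruction that a generic ``matching-type argument'' does not resolve (the paper needs a bounded-degree spanning tree plus a parity subgraph for this), and one must guarantee that the deleted graph has maximum degree $O(\sqrt{d\log n})$ at \emph{every} vertex, which is what forces the $C_5$ degree-adjusting gadgets of \Cref{lem:sponge-graph} and the Hall-type \Cref{lem: Hall}. Your spectral claim for the non-uniform blow-up is likewise unproven as stated: writing $D^{1/2}AD^{1/2}=sA+\sqrt{s}(E'A+AE')+E'AE'$, the perturbation has operator norm of order $d_0$, and the hoped-for cancellation ``off the top eigenvector'' for a random $R$ is exactly the kind of estimate that needs an argument, not an appeal to Weyl's inequality. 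None of this is needed to establish Conlon's statement, but if your goal is the regular version for all $n$, you are re-deriving \Cref{thm:main} and would need the apparatus of Sections 2--4 to close these holes.
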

Unlike Alon's construction, Conlon's construction is not explicit and not regular. However, it is more intuitive as to why it is triangle-free and pseudorandom. Also, note that the value of $\beta$ above is $\log n_i$ factor away from the optimal bound, but this log factor might just be an artifact of the proof method.

Notice that Alon's constructions provide $n$-vertex graphs for an exponentially increasing sequence of $n$, leaving out many values of $n$. 
Although one can simply take an arbitrary $n$-vertex induced subgraph of the construction to obtain a pseudorandom triangle-free graph and the eigenvalue interlacing theorem guarantees its optimal pseudorandomness, such a graph would not be regular. 
Indeed, it is very interesting to ask whether one can ensure the criterion of pseudorandomness as well as two local restrictions of degree-regularity and triangle-freeness.

\begin{problem}
    Is there a triangle-free $(n, d, \lambda)$-graph where $d = \Omega (n^{2/3})$ and $\lambda = O(d^{1/2})$ for all $n$?
\end{problem}

We prove that this is true if we allow an additional $\log^{1/2} n$ factor at the eigenvalue bound $\lambda$.

\begin{theorem}\label{thm:main}
For all sufficiently large integer $n$, there is a triangle-free $(n, d, \lambda)$-graph where $d\geq \frac{1}{20} n^{2/3}$ and $\lambda \leq 200(d \log n)^{1/2}.$ 
\end{theorem}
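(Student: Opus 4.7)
The plan is to combine Alon's construction (\Cref{thm:alon}) with a random induced-subgraph argument followed by a degree-regularization step. Given large $n$, choose the smallest $k\in\NN$ with $3\nmid k$ for which $N:=2^{3k}\ge n$; since consecutive values of $k$ not divisible by $3$ differ by at most $2$, this gives $n\le N\le 64n$. Let $H$ be the triangle-free graph from \Cref{thm:alon} on $N$ vertices, $D$-regular with $D=2^{k-1}(2^{k-1}-1)=\Theta(N^{2/3})$ and $\lambda(H)\le \Lambda=O(N^{1/3})=O(D^{1/2})$.

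First, sample a uniformly random $V_1\subseteq V(H)$ with $|V_1|=n$ and set $G_0:=H[V_1]$. The subgraph $G_0$ is automatically triangle-free, and by Cauchy's eigenvalue interlacing applied to the adjacency matrix of $H$, we have $\lambda_2(G_0)\le \lambda_2(H)\le \Lambda$ and $\lambda_n(G_0)\ge \lambda_N(H)\ge -\Lambda$, so every eigenvalue of $G_0$ other than the top one lies in $[-\Lambda,\Lambda]$. For each $v\in V_1$, the degree $\deg_{G_0}(v)$ is hypergeometric with mean $\mu:=D(n-1)/(N-1)=\Theta(Dn/N)=\Omega(n^{2/3})$; a Chernoff-type concentration bound plus a union bound over $v\in V_1$ shows that, with high probability, every vertex satisfies $|\deg_{G_0}(v)-\mu|\le s$ for $s:=C\sqrt{\mu\log n}$ with an absolute constant $C$. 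Fix an outcome of $V_1$ on which this ``near-regularity'' event holds.

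Next, choose an integer $d=\mu-O(s)$ such that $d\ge n^{2/3}/20$ (feasible for large $n$ since $s=o(\mu)$, with parity adjusted by $1$ if needed), and look for a subgraph $F\subseteq G_0$ with $\deg_F(v)=\deg_{G_0}(v)-d$ for every $v$; the excesses $\deg_{G_0}(v)-d$ are nonnegative integers of size $O(s)$. Given such an $F$, the graph $G:=G_0\setminus F$ is $d$-regular, triangle-free, and has $n$ vertices. Since $\Delta(F)\le O(s)$, the operator norm of the adjacency matrix $A_F$ is at most $\Delta(F)=O(s)$, and Weyl's inequality gives $|\lambda_i(G)-\lambda_i(G_0)|\le O(s)$ for every $i$. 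Combined with the interlacing bound $\lambda_i(G_0)\in[-\Lambda,\Lambda]$ for $i\ge 2$, one obtains $\lambda(G)\le \Lambda+O(s)=O(\sqrt{D\log n})=O(\sqrt{d\log n})$, and a careful tracking of constants yields $\lambda(G)\le 200\sqrt{d\log n}$.

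The main obstacle is proving the existence of the regularizing subgraph $F\subseteq G_0$ with the prescribed small degree sequence — a $b$-matching problem with $b(v)\in[0,O(s)]$. The hope is that the relevant Tutte/Erd\H{o}s--Gallai-type feasibility conditions hold because $G_0$ inherits strong pseudorandomness from $H$: the interlacing bound $\lambda(G_0)\le \Lambda$ combined with near-regularity yields an expander-mixing-type estimate for $G_0$, which forces many edges across every reasonably-sized pair of disjoint subsets and provides the slack needed to realize the excess degrees. A concrete implementation would most likely proceed by an iterative randomized edge-deletion procedure — at each step, deleting an edge between two vertices whose degrees still exceed~$d$, with the choice randomized to preserve pseudorandomness — and the bulk of the technical work will be to verify via concentration that the process terminates with no residual excess.
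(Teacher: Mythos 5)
Your overall skeleton is the same as the paper's: take Alon's graph on $N=2^{3k}$ vertices with $n\le N\le 64n$, pass to a uniformly random $n$-vertex induced subgraph, use Chernoff plus a union bound to get all degrees within $O(\sqrt{\mu\log n})$ of the mean, delete a subgraph $F$ of small maximum degree to make the graph exactly regular, and control the eigenvalues of the result by interlacing plus a norm perturbation bound (the paper's \Cref{lem: obtain lambda} is exactly your Weyl step). Up to that point your argument is fine.

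The genuine gap is the step you yourself flag as ``the main obstacle'': you never actually produce the regularizing subgraph $F$ with $d_F(v)=d_{G_0}(v)-d$ for every $v$, and the two mechanisms you gesture at do not work as stated. First, the iterative procedure ``delete an edge between two vertices whose degrees still exceed $d$'' terminates not with zero excess but when the set $W$ of still-too-high vertices becomes an \emph{independent set}; at that point every further deletion must lower the degree of some vertex that is already at its target, so the process cannot finish. (The paper runs exactly this greedy step to produce $G_1$ and then has to do substantially more work to handle the stuck configuration.) Second, there is a per-vertex parity obstruction: a subgraph with exactly the prescribed degrees exists only if the parities can be arranged, and shifting $d$ by one flips the required parity at \emph{every} vertex simultaneously, which cannot fix an arbitrary pattern of parities. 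The paper resolves these issues with three dedicated tools, which constitute most of its content: a Hall-type lemma (\Cref{lem: Hall}) that routes the excess off the independent set $W$ while decreasing all other degrees by at most $\log^{10}n$; a bounded-degree spanning tree whose parity subgraph (\cite{itai1977finding}) corrects all vertex parities at cost $O(1)$ per vertex; and the ``degree adjusting graphs'' $R,S$ of \Cref{lem:sponge-graph}, built from edge-disjoint copies of $C_5$ (whose existence uses triangle-freeness), which permit a final exact downward adjustment of each degree by any prescribed even amount up to $2n^{1/10}$. Without some substitute for these steps, the claim that a $d$-regular spanning subgraph with the required degree deficiencies exists is unsupported; appealing to Tutte/Erd\H{o}s--Gallai-type conditions in a graph of density $\Theta(n^{-1/3})$ is precisely the nontrivial part.
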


In order to prove this theorem, we take a random induced subgraph $G[X]$ of Alon's construction and further delete its edges to make it regular while making sure that $\lambda(G[X])$ does not increase too much while the edge deletions. Although other parts of our proof can be made constructive\footnote{We use randomness for the function $g$ in the proof of \Cref{lem:sponge-graph}, but it is easy to take desired collections $\mathcal{C}_v$ in a constructive way.}, taking an induced subgraph $G[X]$ with $\Delta(G[X])-\delta(G[X]) \leq O(n^{1/3}\log^{1/2}n)$ is not constructively done.


\section{Preliminaries}\label{sec:prelim}

\subsection{Notations}\label{subsec:notations}

We denote by $[n]$ the set $\{1, \dots, n\}.$ If we claim that a statement hold when $\gamma_1, \gamma_2 \ll \alpha, \beta$, it means there is a function $f$ and $g$ such that the statement holds for all $\gamma_1 \leq f(\alpha, \beta)$ and $\gamma_2 \leq g(\alpha, \beta)$. We do not explicitly compute these functions $f$ and $g.$ For our convenience, we omit the floor and ceilings for large numbers and treat them as integers. 

In this paper, we use general notations in graph theory. For a graph $G$, we denote by $V(G)$ and $E(G)$ the set of vertices and edges of $G$, respectively. For a vertex $v\in V(G)$, we write $N_G(v)$ as the set of neighbors of $v$ in $G.$ We denote $|N_G(v)|$ as $d_G(v)$, the degree of $v$ in $G.$ For two vertex sets $S,T$, $e_G(S,T)$ counts the pair $s\in S$ and $t\in T$ with $st\in E(G)$. In particular, $e_G(V(G),V(G)) = 2|E(G)|$.
If $G$ is clear in the context, we may omit the subscript from these notations. We write $\delta(G)$ and $\Delta(G)$ as the minimum and maximum degree of $G$, respectively. 
Let $X, S, T\subseteq V(G).$ We denote by $G[X]$ the induced subgraph of $G$ induced by the vertex subset $X.$


\subsection{Useful lemmas}\label{subsec:useful-lemmas}
The following Chernoff bound will be useful for us.

\begin{lemma}[Chernoff bound~\cite{janson2011random}]\label{lem:chernoff}
    Let $X$ be either a sum of $n$ independent Bernoulli random variables or a random variable following a hypergeometric distribution with parameters $N,K$, and $n$. Then for every $\delta \in [0,1)$, we have 
    $Pr\left(|X - \mathbb{E}[X]| \geq \delta \mathbb{E}[X] \right) \leq 2 exp\left(-\frac{\delta^2 \mathbb{E}[X]}{3}\right).$
\end{lemma}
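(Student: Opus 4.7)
The plan is to prove both parts simultaneously via the classical Chernoff method, reducing everything to a single moment generating function (MGF) estimate. Write $\mu = \mathbb{E}[X]$. For any $t > 0$, Markov's inequality applied to $e^{tX}$ gives
$$\Pr(X \geq (1+\delta)\mu) \leq e^{-t(1+\delta)\mu}\, \mathbb{E}[e^{tX}],$$
so the entire problem reduces to controlling $\mathbb{E}[e^{tX}]$.

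For the sum of independent Bernoullis $X = \sum_{i=1}^n X_i$ with $\mathbb{E}[X_i] = p_i$, independence and the elementary inequality $1+x \leq e^x$ give
$$\mathbb{E}[e^{tX}] = \prod_{i=1}^n \bigl(1 + p_i(e^t - 1)\bigr) \leq \exp\bigl((e^t-1)\mu\bigr).$$
For the hypergeometric case, I would invoke Hoeffding's classical theorem (Hoeffding 1963) that, for any convex $\phi$, the expectation of $\phi(X)$ under sampling without replacement is dominated by the same expectation under sampling with replacement. Applied to $\phi(x) = e^{tx}$, this bounds the hypergeometric MGF by the corresponding binomial MGF, which by the Bernoulli case is again at most $\exp((e^t-1)\mu)$.

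With a common MGF bound in hand, I would optimize the Markov step at $t = \ln(1+\delta)$ to obtain
$$\Pr(X \geq (1+\delta)\mu) \leq \left(\frac{e^\delta}{(1+\delta)^{1+\delta}}\right)^{\mu}.$$
Passing to the stated form amounts to the elementary inequality $(1+\delta)\ln(1+\delta) - \delta \geq \delta^2/3$ on $[0,1)$, verified by noting the difference vanishes at $\delta = 0$ and computing that its derivative stays non-negative on the interval. The lower tail is handled symmetrically by repeating the argument with $t < 0$ and $e^{-t} = 1-\delta$; the corresponding inequality $(1-\delta)\ln(1-\delta) + \delta \geq \delta^2/2$, which follows from $-\ln(1-\delta) \geq \delta$, yields the even stronger $\Pr(X \leq (1-\delta)\mu) \leq \exp(-\delta^2\mu/2)$. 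Combining the two tails produces the factor $2$ out front.

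The main obstacle is the hypergeometric case, since for sampling without replacement the indicator variables are not independent and the MGF cannot be factored. Hoeffding's MGF-domination theorem is the standard clean way around this, after which everything reduces to the independent Bernoulli computation plus a short calculus check. Since the lemma is quoted verbatim from the textbook of Janson, \L uczak and Ruci\'nski, in practice the proof amounts to an appeal to that reference.
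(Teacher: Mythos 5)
The paper gives no proof of this lemma at all: it is quoted verbatim as a known result from the Janson--\L{}uczak--Ruci\'nski textbook, so there is nothing in the source to compare against line by line. Your proposal supplies the standard derivation, and it is correct. The Bernoulli case via the moment generating function bound $\mathbb{E}[e^{tX}] \leq \exp((e^t-1)\mu)$, the optimization at $t = \ln(1+\delta)$, and the reduction of the hypergeometric case to the binomial one via Hoeffding's convex-order domination theorem are exactly the ingredients used in the cited reference (Theorems 2.1 and 2.10 there). The two calculus inequalities you invoke, $(1+\delta)\ln(1+\delta) - \delta \geq \delta^2/3$ and $(1-\delta)\ln(1-\delta) + \delta \geq \delta^2/2$ on $[0,1)$, are both true; the only place where you are slightly terse is the first one, since the derivative $\ln(1+\delta) - 2\delta/3$ of the difference is not monotone (it increases up to $\delta = 1/2$ and then decreases), so its non-negativity on $[0,1)$ needs the extra observation that it vanishes at $0$ and is still positive at $\delta = 1$ because $\ln 2 > 2/3$. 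That is a one-line fix, not a gap. In short: the proposal is a complete and correct proof of a statement the paper only cites.
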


The following two lemmas provide some controls on pseudorandomness after edge deletions.

\begin{lemma}\label{lem:new-bijumbledness}
    Let $G$ be an $n$-vertex $(p, \beta)$-bijumbled graph and $F\subseteq G[X]$ with $X\subseteq V(G)$.
    Then the graph $G[X]\setminus F$ is  $(p, \beta + \Delta(F))$-bijumbled.
\end{lemma}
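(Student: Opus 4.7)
\medskip
\noindent\textbf{Proof proposal.}
The plan is to verify the bijumbledness condition for $G[X]\setminus F$ directly from its definition, using the triangle inequality to separate the contribution of $G$ from that of the deleted edges. Let $H = G[X]\setminus F$, which has vertex set $X$, and fix arbitrary $S, T \subseteq X$. Since $S, T \subseteq X$, edges of $H$ between $S$ and $T$ are precisely edges of $G$ between $S$ and $T$ that do not lie in $F$, so $e_H(S,T) = e_G(S,T) - e_F(S,T)$.

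First I would apply the triangle inequality to write
\[
    \bigl\lvert e_H(S,T) - p|S||T| \bigr\rvert
    \;\leq\; \bigl\lvert e_G(S,T) - p|S||T| \bigr\rvert + e_F(S,T).
\]
By the assumed $(p,\beta)$-bijumbledness of $G$, the first term is at most $\beta\sqrt{|S||T|}$.

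Second, I would bound the contribution of $F$ by observing that each vertex has at most $\Delta(F)$ neighbors in $F$, hence $e_F(S,T) \leq \Delta(F)\,|S|$ and, symmetrically, $e_F(S,T) \leq \Delta(F)\,|T|$. Therefore $e_F(S,T) \leq \Delta(F)\min(|S|,|T|) \leq \Delta(F)\sqrt{|S||T|}$. Substituting these two bounds into the display above yields exactly the required inequality
\[
    \bigl\lvert e_H(S,T) - p|S||T| \bigr\rvert \;\leq\; \bigl(\beta + \Delta(F)\bigr)\sqrt{|S||T|}.
\]

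There is no real obstacle here: the argument is a one-line triangle inequality combined with the trivial maximum-degree bound $e_F(S,T)\le \Delta(F)\min(|S|,|T|)$. The only point worth flagging is the asymmetric-looking replacement of $\min(|S|,|T|)$ by $\sqrt{|S||T|}$, which is justified by the AM–GM-type inequality $\min(a,b)\leq \sqrt{ab}$ for nonnegative $a,b$; this is what allows the error term coming from $F$ to be absorbed into the $\sqrt{|S||T|}$-scaled bijumbledness bound rather than a potentially weaker $\min$-type bound.
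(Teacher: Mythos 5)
Your proposal is correct and follows essentially the same route as the paper: a triangle inequality splitting off the deleted edges, the bound $e_F(S,T)\leq \Delta(F)\min(|S|,|T|)$ (the paper phrases this by assuming $|S|\leq|T|$ without loss of generality), and the observation $\min(|S|,|T|)\leq\sqrt{|S||T|}$. No gaps.
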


\begin{proof}
    For $S,T\subseteq X$ with $|S|\leq |T|$, we have 
    \begin{align*}
        \labs e_{G[X]\setminus F}(S, T) - p|S||T| \rabs 
        &\leq  \labs e_{G}(S, T) - p|S||T| \rabs + \labs e_{F}(S, T) \rabs \\
        &\leq  \beta \sqrt{|S||T|} + \Delta(F)|S|
        \leq (\beta + \Delta(F))\sqrt{|S||T|} .
   \end{align*}
    Thus $G[X]\setminus F$ is $(p, \beta + \Delta(F)$-bijumbled.  
\end{proof}

\begin{lemma}\label{lem: obtain lambda}
    Let $G$ be an $(N,d,\lambda)$-graph and $X\subseteq V(G)$ be a vertex subset of size $n$. Suppose that $F$ is a subgraph of $G[X]$ and $G[X] \setminus F$ is a $d'$-regular graph.
    Then $G[X]\setminus F$ is an $(n,d',\lambda + \Delta(F))$-graph.
\end{lemma}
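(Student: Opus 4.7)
The plan is to compare the spectra of $A_G$, $A_{G[X]}$, and $A_H$ with $H\defeq G[X]\setminus F$ in two steps: first pass from $G$ to $G[X]$ via Cauchy interlacing, then from $G[X]$ to $H$ via Weyl's inequality applied to $A_H = A_{G[X]} - A_F$.

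Let $\lambda_1(G)\geq\cdots\geq \lambda_N(G)$ denote the eigenvalues of $A_G$, so $\lambda_1(G)=d$ and $\max\{\lambda_2(G),-\lambda_N(G)\}\leq \lambda$. Writing $\mu_1\geq\cdots\geq\mu_n$ for the eigenvalues of $A_{G[X]}$, Cauchy's interlacing theorem (applied to the principal submatrix $A_{G[X]}$ of $A_G$) gives $\mu_i\leq \lambda_i(G)$ and $\mu_i\geq \lambda_{i+N-n}(G)$. In particular $\mu_2\leq \lambda_2(G)\leq \lambda$ and $\mu_n\geq \lambda_N(G)\geq -\lambda$, so every eigenvalue of $A_{G[X]}$ except possibly $\mu_1$ lies in $[-\lambda,\lambda]$.

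Next, writing $\nu_1\geq \cdots\geq \nu_n$ for the eigenvalues of $A_H$, Weyl's inequality applied to $A_H = A_{G[X]} - A_F$ yields $|\nu_i-\mu_i|\leq \|A_F\|_{\mathrm{op}}$ for every $i$. Since $A_F$ is a symmetric $0/1$ matrix with row sums at most $\Delta(F)$, we have $\|A_F\|_{\mathrm{op}}\leq \Delta(F)$. Combining with the bounds above, $\nu_2\leq \mu_2+\Delta(F)\leq \lambda+\Delta(F)$ and $\nu_n\geq \mu_n-\Delta(F)\geq -\lambda-\Delta(F)$.

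Because $H$ is $d'$-regular, $\nu_1=d'$ is the largest absolute eigenvalue, and $\lambda(H)=\max\{|\nu_2|,|\nu_n|\}$. From $\nu_n\leq \nu_2\leq \lambda+\Delta(F)$ and $-(\lambda+\Delta(F))\leq \nu_n\leq \nu_2$ we conclude $\lambda(H)\leq \lambda+\Delta(F)$, which is exactly the claimed $(n,d',\lambda+\Delta(F))$-graph property. There is no real obstacle here: the only mild care needed is to check that $\mu_1$ (which interlacing does not control by $\lambda$) plays no role, since $H$ regular pins $\nu_1=d'$ and only $\nu_2,\nu_n$ enter $\lambda(H)$.
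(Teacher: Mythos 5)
Your proof is correct, and it reaches the same bound by a genuinely different (though closely related) organization of the argument. The paper uses a single test-vector computation: by Courant--Fischer it picks one unit vector $\mathbf{v}\perp\mathbf{1}_X$ achieving $\lambda(G[X]\setminus F)=|\mathbf{v}^T(A_X-A_F)\mathbf{v}|$, extends it by zeros to $\mathbf{u}\perp\mathbf{1}$ so that $|\mathbf{v}^TA_X\mathbf{v}|=|\mathbf{u}^TA\mathbf{u}|\leq\lambda$, bounds $|\mathbf{v}^TA_F\mathbf{v}|\leq\Delta(F)$, and applies the triangle inequality. You instead control the whole spectrum in two stages, citing Cauchy interlacing for the passage $A_G\to A_{G[X]}$ and Weyl's inequality for the perturbation $A_{G[X]}\to A_{G[X]}-A_F$. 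In effect you have modularized the two halves of the paper's one-line Rayleigh-quotient estimate into the two named theorems whose proofs they mirror: extending by zeros and using $\mathbf{u}\perp\mathbf{1}$ is exactly the interlacing step for $\mu_2$ and $\mu_n$, and $|\mathbf{v}^TA_F\mathbf{v}|\leq\|A_F\|_{\mathrm{op}}\leq\Delta(F)$ is exactly the Weyl step. Your route buys slightly more information (bounds on every $\mu_i$ and $\nu_i$, not just the extremal ones) at the cost of invoking heavier machinery; the paper's is more economical and self-contained. You also correctly identified and dispatched the one delicate point common to both arguments, namely that $\mu_1$ (equivalently, the direction $\mathbf{1}_X$) is not controlled by $\lambda$, and that the $d'$-regularity of $G[X]\setminus F$ is what makes this harmless since only $\nu_2$ and $\nu_n$ enter $\lambda(G[X]\setminus F)$.
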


\begin{proof}
    Let $A$ be the adjacent matrix of $G$ where the first $n$ rows and columns correspond to the vertices in $X$, and $A_X, A_F$ be the adjacent matrix of $G[X]$ and $F$, respectively.
    Let $\mathbf{1}$ be the column vector in $\mathbb{R}^N$ with all entries $1$, and let $\mathbf{1}_X$ be the column vector in $\mathbf{R}^n$ with all entries $1$.

    By using Courant-Fischer Theorem, there exists a unit vector $\mathbf{v}$ perpendicular to $\mathbf{1}_X$ with 
    $|\mathbf{v}^T (A_X - A_F) \mathbf{v}| = \lambda (G[X]\setminus F)$. 
    Let $\mathbf{u}$ be the vector obtained by appending $N-n$ zeros at the end of $\mathbf{v}$. As $\mathbf{u} \perp \mathbf{1}$, Couran-Fischer Theorem implies $ |\mathbf{v}^T A_X \mathbf{v}^T|=  |\mathbf{u}^T A \mathbf{u}| \leq \lambda.$
    On the other hand, we have
    \begin{align*}
        \lambda(G[X]\setminus F)  = |\mathbf{v}^T (A_X -A_F) \mathbf{v}^T|\leq |\mathbf{v}^T A_X \mathbf{v}|  + |\mathbf{v}^T A_F \mathbf{v}| \leq \lambda + \Delta(F).
    \end{align*}
    This shows that $G[X]\setminus F$ is an $(n,d',\lambda+\Delta(F))$-graph.
\end{proof}

As we plan to delete some edges from a graph to make it regular, we need to take a subgraph of a specific degree sequence in a pseudorandom graph. The following lemma allows us to find a subgraph with specific degrees on a given set while having small degrees on all other vertices.

\begin{lemma}\label{lem: Hall}
Let $0< 1/n\ll 1.$
    Let $G$ be an $n$-vertex $(p,\beta)$-bijumbled graph with $p\geq n^{-1/3}\log^{-1} n, \beta =n^{1/3}\log n$, and $\delta(G)\geq \frac{1}{2} p n$ and let $X\subseteq V(G)$ be an independent set in $G$.
    For $d=n^{1/3} \log n$ and a given function $f:X\rightarrow [0,d]$, there exists a subgraph $F$ of $G$ satisfying the following.

    \begin{itemize}
        \item For each $x\in X$, $d_F(x)=f(x)$.
        \item For each $y\in V(G)\setminus X$, $d_F(y)\leq \log^{10} n$ 
    \end{itemize}
\end{lemma}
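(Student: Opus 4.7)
The plan is to reformulate the problem as a bipartite degree-constrained subgraph problem and then verify a defect-Hall (equivalently, max-flow min-cut) inequality using bijumbledness.

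First I would exploit that $X$ is independent: applying $(p,\beta)$-bijumbledness with $S=T=X$ gives $p|X|^2\le \beta|X|$, so $|X|\le \beta/p\le n^{2/3}\log^2 n$. Hence, setting $Y:=V(G)\setminus X$, we have $|Y|\ge n/2$ and the total demand is bounded by $\sum_{x\in X} f(x)\le d|X|\le d\beta/p \le n\log^3 n$. Since $X$ is independent, every edge of $G$ incident to $X$ lands in $Y$, and including edges of $G[Y]$ in $F$ could only hurt the $Y$-side degree constraint, so I restrict attention to the bipartite graph $H\subseteq G$ with parts $X$ and $Y$.

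Next I would set up the standard flow network on $\{s\}\cup X\cup Y\cup\{t\}$ with $s\to x$ of capacity $f(x)$, $x\to y$ of capacity $1$ for each $xy\in E(H)$, and $y\to t$ of capacity $\log^{10}n$. An integer $(s,t)$-flow of value $\sum_{x}f(x)$ is exactly the edge indicator of the required $F$. By max-flow min-cut, such a flow exists iff, after optimising the cut over the $Y$-side,
\[
    \sum_{x\in A} f(x) \;\le\; \sum_{y\in Y} \min\bigl(|N_G(y)\cap A|,\ \log^{10}n\bigr)
\]
holds for every $A\subseteq X$.

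The main obstacle is verifying this inequality when many $y\in Y$ have $|N_G(y)\cap A|\gg \log^{10}n$, since the truncation then discards most of the $A$-to-$Y$ edges. Writing $Y_A^+:=\{y\in Y:|N_G(y)\cap A|>\log^{10}n\}$, I split into three cases.

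\textbf{Case 1} ($p|A|\le \tfrac12\log^{10}n$): Bijumbledness on $(A,Y_A^+)$, combined with $|Y_A^+|\log^{10}n\le e_G(A,Y_A^+)$, pins $|Y_A^+|\le 4\beta^2|A|/\log^{20}n$, so the total truncation loss is at most $\beta\sqrt{|A||Y_A^+|}\le 2\beta^2|A|/\log^{10}n$. Using $e_G(A,Y)\ge |A|\delta(G)\ge |A|pn/2$, the right-hand side is at least $|A|\bigl(pn/2-2\beta^2/\log^{10}n\bigr)$, which exceeds $d|A|$ for $n$ large.

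\textbf{Case 2} ($p|A|>\tfrac12\log^{10}n$ and $|Y_A^+|>n/4$): The right-hand side is at least $|Y_A^+|\log^{10}n\ge (n/4)\log^{10}n$, which dwarfs the total demand $\sum_{x}f(x)\le n\log^3 n$.

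\textbf{Case 3} ($p|A|>\tfrac12\log^{10}n$ and $|Y_A^+|\le n/4$): Bijumbledness yields $e_G(A,Y_A^+)\le p|A|n/4+\tfrac12\beta\sqrt{|A|n}$, so $e_G(A,Y_A^-)\ge |A|pn/4-\tfrac12\beta\sqrt{|A|n}$. The hypothesis $|A|>\log^{10}n/(2p)$ exceeds the threshold $\Theta(\beta^2/(p^2n))$ at which the bijumbledness error gets absorbed into the main $|A|pn/4$ (the relevant inequality reduces to $\log^7 n\gtrsim 1$), so the right-hand side is at least $d|A|$.

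Once the cut condition is verified in every case, the integer version of max-flow min-cut produces the required $F$; because the network only routes flow between $X$ and $Y$, the resulting $F$ is bipartite with no edges inside $Y$.
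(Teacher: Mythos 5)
Your proof is correct, and at the strategic level it matches the paper's: reduce the existence of $F$ to a bipartite feasibility condition and verify that condition using the minimum degree for small sets $A$ and bijumbledness for large ones. The technical route differs, though. The paper duplicates each $x\in X$ into $d$ copies and each $y\in Y$ into $\log^{10}n$ copies, and then checks Hall's condition for a matching covering the $X$-side of the auxiliary graph; this only requires the coarse bound $|N_G(A)|\cdot\log^{10}n\ge d|A|$, which is handled by exactly two cases ($|A|\le n^{1/3}\log^5 n$, where the minimum degree suffices, and $|A|>n^{1/3}\log^5 n$, where bijumbledness forces $|N_G(A)|\ge n/4$). You instead verify the exact max-flow min-cut condition $\sum_{x\in A}f(x)\le\sum_{y}\min\bigl(|N_G(y)\cap A|,\log^{10}n\bigr)$, which obliges you to control the truncation loss over $Y_A^+$ and leads to your three-case analysis; your bounds on $|Y_A^+|$ and on the loss check out, as does the Case~3 computation (the margin is indeed a $\log^{7}n$ factor). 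The trade-off: your argument is sharper (it characterizes exactly when $F$ exists) but does more work than needed, whereas the paper's vertex-duplication trick deliberately weakens the condition to something that only involves $|N_G(A)|$ and is therefore quicker to verify from bijumbledness. Both are valid; one small presentational point is that you should note $f$ is integer-valued so that integrality of the max flow applies, and that $Y_A^+=\emptyset$ is the trivial subcase of Case~1.
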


\begin{proof}
    As $X$ is an independent set and $G$ is $(p,\beta)$-bijumbled, we have $0=e_G(X)\geq \frac{1}{2}p|X|^2- \beta |X|$, meaning that $|X| \leq 2n^{2/3} \log^2 n $.

    Let $Y=V(G)\setminus X$ and consider the bipartite graph $G[X,Y]$ between $X$ and $Y$.
    We duplicate each vertex $x\in X$ into $d$ copies with the same neighborhood as $x$ and duplicate each vertex $y$ in $Y$ into $\log^2 n$ copies with the same neighborhood as $y$. Let ${\rm Aux}$ be the new auxiliary graph obtained with bipartition $(X',Y')$ where $X',Y'$ are the sets of duplicates. We claim that ${\rm Aux}$ satisfies the Hall's condition, hence containing a matching covering $X'$. For a nonempty subset $A'\subseteq X'$, we have a corresponding subset $A\subseteq X$ of original vertices with size at least $|A|\geq |A'|/d$.

    If $|A|\leq \frac{\beta^2 \log n}{ p^2 n} =  n^{1/3} \log^5 n $, then the minimum degree condition on $G$ ensures 
    $$|N_{{\rm Aux}}(A')|\geq \delta(G)\cdot \log^{10}n \geq \frac{1}{2} pn \log^{10}n \geq n^{1/3} \log n \cdot  n^{1/3} \log^5 n = d |A| \geq |A'|.$$
    If $|A|> \frac{\beta^2 \log n}{ p^2 n} = n^{1/3} \log^5 n$, then we have 
    $|N_{G[X,Y]}(A)|\geq n/4$, as otherwise the set $C\subseteq Y\setminus N_{G[X,Y]}(A)$ of size $n/4$ satisfies 
    $$e_{G[X,Y]}(A,C) \geq  p|A||C|-\beta\sqrt{|A||C|} = \sqrt{|A||C|}\left(   n^{1/3} \log^{3/2} n - n^{1/3} \log n \right)>0,$$
    a contradiction. Hence, as $|X|\leq 2 n^{2/3} \log^2 n$, we have $$|N_{{\rm Aux}}(A')|\geq |N_{G[X,Y]}(A)|\cdot \log^{10}n \geq \frac{1}{4} n \log^{10} n \geq d|X| =|X'|\geq |A'|.$$
    Thus Hall's condition holds for all $A'\subseteq X'$ and Hall's theorem implies that ${\rm Aux}$ contains a matching covering $X'$. It is straightforward from the definition of ${\rm Aux}$ that this corresponds to a graph $F_0$ on a bipartite graph $G[X,Y]$ where all vertices in $X$ have degree $d$ and all vertices in $Y$ has degree at most $\log^{10}n$. For each $x$, we arbitrarily delete exactly $d-f(x)$ edges incident with it to obtain graph $F$, then it is obvious that this is a desired graph.
\end{proof}

We also need the following lemma. We believe that the proofs of several known results (for example, results in ~\cite{krivelevich2006pseudo,hyde2023spanning}) without any modifications imply this lemma. However, those results are all stated for the $(n,d,\lambda)$-graphs and we don't want to assume the degree-regularity in the following statement. Hence we provide a sketch of its easy proof.

\begin{lemma}\label{lem: spanning tree}
Let $0<1/n\ll 1$.
    Let $G$ be an $n$-vertex $(p,\beta)$-jumbled graph with $p\geq n^{-1/3}\log^{-1}n$, $\beta = n^{1/3} \log n$ and $\delta(G)\geq \frac{1}{2}p n$. Then $G$ contains a spanning tree $T$ with $\Delta(T)\leq 10$.
\end{lemma}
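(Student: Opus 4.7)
\begin{proofsketch}
My plan is to grow a spanning tree $T$ greedily, edge by edge, while maintaining $\Delta(T) \leq 10$ throughout. Start with $T = \{v_0\}$ for an arbitrary vertex $v_0$. At each step, let $s = |V(T)|$, $U = V(G) \setminus V(T)$, and let $S' \subseteq V(T)$ denote the non-saturated vertices (those with $d_T(v) \leq 9$). Since $\sum_{v} d_T(v) = 2(s-1)$, at most $s/5$ vertices are saturated, so $|S'| \geq 4s/5$. The greedy extends as long as there is an edge from $S'$ to $U$.

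To verify extensibility, I would first derive a bipartite consequence of $(p,\beta)$-jumbledness: from the identity $e_G(S' \cup U) = e_G(S') + e_G(U) + e_G(S', U)$ together with the jumbledness bound on each of the three terms, one obtains $\labs e_G(S', U) - p|S'||U| \rabs \leq 2\beta(|S'| + |U|)$. In particular, if $e_G(S', U) = 0$, then $\min(|S'|, |U|) \leq 4\beta/p = 4n^{2/3}\log^2 n$, so extension is forced whenever both $|S'|$ and $|U|$ exceed this bound (the middle regime). For small $s$ (the beginning of the process), I would instead invoke $\delta(G) \geq pn/2$: summing $d_G(v)$ over $v \in S'$ yields at least $|S'| \cdot pn/2$, while jumbledness bounds the $G$-edges from $S'$ staying in $S$ by roughly $p|S|^2 + O(\beta|S|)$, which is insufficient to absorb everything, again forcing $e_G(S', U) > 0$.

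The main obstacle is the endgame, when $|U| = O(n^{2/3}\log^2 n)$: neither argument rules out a vertex of $U$ whose $G$-neighbors all lie in $U$. I would resolve this by a two-phase approach. In Phase~1, I run the greedy with the stricter cap $\Delta(T_0) \leq 5$ (so $|S'| \geq 3s/5$) until $|V(T_0)| \geq n - Cn^{2/3}\log^2 n$ for a suitable constant $C$; the analysis above still goes through with slightly adjusted constants. In Phase~2, I attach every vertex of $U_0 := V(G) \setminus V(T_0)$ as a pendant to some neighbor in $V(T_0)$, subject to each $v \in V(T_0)$ receiving at most $5$ pendants. This amounts to finding a subgraph $F \subseteq G[U_0, V(T_0)]$ with $d_F(u) = 1$ for $u \in U_0$ and $d_F(v) \leq 5$ for $v \in V(T_0)$, a bipartite defect-matching problem that is solvable by a Hall-type argument in the spirit of \Cref{lem: Hall}: the required condition $|N_G(A) \cap V(T_0)| \geq |A|/5$ follows from bipartite jumbledness when $|A|$ is large and from $\delta(G) \geq pn/2$ combined with the smallness of $|U_0|$ when $|A|$ is small. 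The final tree $T = T_0 \cup F$ then satisfies $\Delta(T) \leq 5 + 5 = 10$ and spans $V(G)$. The crux is verifying Hall's condition in the intermediate size range for $|A|$, where both tools must be combined carefully.
\end{proofsketch}
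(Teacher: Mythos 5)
Your overall strategy mirrors the paper's: build a bounded-degree tree covering almost all of $V(G)$, then attach the leftover vertices to it. (The paper takes a \emph{maximal} tree of maximum degree at most $9$ rather than running an explicit greedy, and attaches the leftover set $B$ via a matching saturating $B$; that difference is cosmetic.) The substantive divergence, and the gap, is in your Phase~2.

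From jumbledness alone, your bipartite inequality $\lvert e_G(S',U)-p|S'||U|\rvert\le 2\beta(|S'|+|U|)$ only forces the greedy to continue while $|U|>4\beta/p$, so Phase~1 terminates with $|U_0|$ as large as $\Theta(\beta/p)=\Theta(n^{2/3}\log^2 n)$. But $\delta(G)\ge \tfrac12 pn$ can be as small as $\tfrac12 n^{2/3}\log^{-1}n$, which is \emph{smaller} than $|U_0|$ by a polylogarithmic factor. Consequently a vertex $u\in U_0$ may have all of its neighbours inside $U_0$: the edge count $e_G(U_0)\le \tfrac{p}{2}|U_0|^2+\beta|U_0|=O(n\log^3 n)$, against the $\ge pn/4$ edges each such $u$ forces inside $U_0$, shows only that at most $O(n^{1/3}\log^4 n)$ vertices of $U_0$ can be of this type --- not that there are none. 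For such a $u$ the defect-Hall condition fails already for $A=\{u\}$, and your proposed justification for small $A$ (``$\delta(G)\ge pn/2$ combined with the smallness of $|U_0|$'') is exactly the step that breaks, because $|U_0|$ is not small relative to $pn$. The paper sidesteps this by invoking the bijumbled bound $e_G(B,S')\ge p|B||S'|-\beta\sqrt{|B||S'|}$ (the graph to which the lemma is applied is in fact bijumbled): maximality of $T_0$ then forces $|B|\le \beta^2/(p^2|S'|)=O(n^{1/3}\log^4 n)\le n^{1/2}=o(pn)$, after which every leftover vertex automatically has $\ge \tfrac12 pn-|B|>0$ neighbours in the tree and the attachment step is immediate. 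To repair your argument you would either need to import that stronger $\sqrt{|S||T|}$-type expansion into the stopping analysis of Phase~1 (driving $|U_0|$ down to $o(pn)$, at which point no defect is needed and a plain matching in the spirit of \Cref{lem: Hall} suffices), or add a further mechanism for absorbing leftover vertices whose entire neighbourhoods lie in $U_0$.
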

\begin{proofsketch}
Take a largest tree $T_0$ of $G$ with maximum degree at most $9$.
 It is easy to see that $|T_0| = \Omega(n)$ (for example, Friedman-Pippenger~\cite{friedman1987expanding} with the bijumbledness implies this) and $T$ contains has at least $8|T_0|/9$ vertices of degree less than $9$.
    Then we have $B=V(G)\setminus V(T_0)$ has size at most $n^{1/2}$ and $|V(T_0)|\geq n- n^{1/2}$ as otherwise bijumbledness of $G$ provides an edge between $B$ and the set of vertices in $T_0$ with degree less than $9$, and adding it to $T_0$ contradicts the maximality of $T_0$.

    As $n^{1/2} = \omega(pn)$, the minimum degree condition and bijumbledness implies that
    the bipartite graph $G[B,V(G)\setminus B]$ satisfies Hall's condition.
    So there is a matching in $G[B,V(T_0)]$ saturating $B$ and
    the tree $T_0$ together with this matching provides a desired spanning tree.
 \end{proofsketch}


\section{Degree adjusting graphs}\label{sec:adjusting-graph}
In this section, we prove that a pseudorandom graph contains a pair of `degree adjusting graphs'.
Roughly speaking, the following lemma ensures that we can find two graphs $R$ and $S$ in a pseudorandom graph such that $R\cup S$ contains a subgraph with any degree sequence which is dominated by the degree sequence $\mathbf{d}$ of $S$ as long as it is not too far from $\mathbf{d}$. This lemma will be useful for ensuring that all the vertices of our graph will have exactly the same degree.

\begin{lemma}\label{lem:sponge-graph}
    Suppose $0 < \frac{1}{n} \ll 1.$
    Let $G$ be an $n$-vertex triangle-free $(p, n^{1/3}\log n)$-bijumbled graph with $ n^{-1/3} \log^{-1}n \leq p \leq n^{-1/3} \log n$ and  $\delta(G)\geq \frac{1}{2}pn.$ 
    Then there are two edge-disjoint subgraphs $R$ and $S$ such that the following holds.
    \begin{enumerate}[label=\rm (RS\arabic*)]
        \item \label{RS1} $\Delta(R\cup S) \leq 600 n^{1/10}$,
        \item \label{RS2} $d_{S}(v) \geq 2 n^{1/10}$ for each $v\in V(G)$.
        \item \label{RS3} For every function $f:V(G)\rightarrow [n^{1/10}]$, there is a subgraph $H\subseteq R\cup S$ with $d_H(v)= d_{S}(v) -2f(v)$ for each $v\in V(G)$.
    \end{enumerate}
\end{lemma}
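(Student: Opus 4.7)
My plan is to build $S$ as a disjoint family of short ``cherries'' (length-$2$ paths) centered at each vertex and $R$ as an auxiliary pseudorandom subgraph supplying compensating edges via an $f$-factor argument.

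For each $v \in V(G)$ I would greedily select $n^{1/10}$ pairwise edge-disjoint cherries $a_{v,i}-v-b_{v,i}$ centered at $v$, keeping cherries across different vertices edge-disjoint as well. The $(p,n^{1/3}\log n)$-bijumbledness provides far more candidate cherries at each $v$ than needed, and the total edge budget $O(n^{11/10})$ is much smaller than $|E(G)| = \Theta(pn^2)$. Letting $S$ be the union of the chosen cherries, we get $d_S(v) \geq 2n^{1/10}$ from $v$'s own cherries plus at most $O(n^{1/10})$ from cherries in which $v$ appears as a leaf, the latter bounded by Chernoff-type concentration during the greedy selection. Using the random function $g$ alluded to in the footnote, I would then pick an edge-disjoint $R \subseteq G \setminus S$ with $\Delta(R) \leq 100 n^{1/10}$ inheriting bijumbledness from $G$. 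This establishes \ref{RS1} and \ref{RS2}.

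For \ref{RS3}, given $f: V(G) \to [n^{1/10}]$, I would delete exactly $f(v)$ cherries at $v$ from $S$, which drops $d_S(v)$ at $v$ by $2f(v)$ as required. Each removed cherry $a-v-b$ also lowers $d_S(a)$ and $d_S(b)$ by $1$; let $s(u)$ count the removed cherries of which $u$ is a leaf. I would then find an $s$-factor $M \subseteq R$ (a subgraph with $d_M(u) = s(u)$ for all $u$) and set $H = (S \setminus \text{removed cherries}) \cup M$. A direct check gives $d_H(v) = d_S(v) - 2f(v)$. The existence of $M$ follows from an $f$-factor argument in bijumbled graphs, in the spirit of \Cref{lem: Hall}: $\sum_u s(u) = 2 \sum_v f(v)$ is even and $s(u) = O(n^{1/10}) \ll \Delta(R)$, so we have ample slack.

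The main obstacle is ensuring that an $s$-factor $M$ can be found in $R$ for every admissible $f$ simultaneously. Since $(s(u))$ depends on both $f$ and on which specific $f(v)$ cherries we delete at each $v$, we must coordinate the deletion rule with the construction of $R$ so that the Hall-type condition holds uniformly. The randomness of $g$ is the tool for this: a random choice of $R$ satisfies the required bipartite expansion for all $O((2n^{1/10})^n)$ feasible sequences $(s(u))$ with positive probability, verifiable by a union bound against the exponentially small Chernoff failure probabilities.
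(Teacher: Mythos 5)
Your construction of $S$ from cherries is workable for \ref{RS1} and \ref{RS2} (modulo the fact that a greedy process is not random, so the ``Chernoff-type concentration'' bounding how often a vertex is used as a leaf needs to be replaced by an explicit counting or reordering argument), but the argument for \ref{RS3} has a genuine gap that I do not see how to close. Deleting a cherry $a\,\text{--}\,v\,\text{--}\,b$ lowers the degree of the leaves $a,b$ as well as of $v$, and your repair step demands a subgraph $M\subseteq R$ with \emph{exactly} prescribed degrees $d_M(u)=s(u)$ for an essentially arbitrary function $s:V(G)\to\{0,1,\dots,O(n^{1/10})\}$. This is the same kind of exact-degree-prescription problem as the lemma you are trying to prove, so the reduction is circular, and ``$s(u)\ll\Delta(R)$ plus evenness of $\sum_u s(u)$'' is nowhere near sufficient: a graph of maximum degree $100n^{1/10}$ can contain stars, small components, or locally bipartite structure, each of which obstructs exact factors (e.g.\ in a star one cannot realize $s(\mathrm{center})=0$, $s(\mathrm{leaf})=1$; in a bipartite component one needs the two sides to carry equal total $s$). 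Moreover, the proposed fix --- a union bound over all $O\left((2n^{1/10})^n\right)=e^{\Theta(n\log n)}$ feasible sequences $(s(u))$ --- cannot work quantitatively: the relevant random quantities (degrees into candidate sets, matchability of a vertex) live at scale $n^{1/10}$, so each individual failure probability is only $e^{-O(n^{1/10})}$, which is vastly larger than $e^{-\Theta(n\log n)}$.

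The paper avoids this entire difficulty with a gadget whose two states have degree sequences differing at \emph{exactly one} vertex: each $C_5$ in a fixed edge-disjoint collection is split as $P_3\sqcup K_2$ (placed in $S$, giving its designated vertex degree $2$) versus $2K_2$ (placed in $R$, giving it degree $0$), while every other vertex of the $C_5$ has degree $1$ in both decompositions. Swapping one for the other therefore lowers the designated vertex's degree by $2$ and changes no other degree, so \ref{RS3} follows by independently swapping $f(v)$ of the $C_5$'s assigned to each $v$ --- no compensating factor is ever needed. (Triangle-freeness is what makes $C_5$'s abundant at every vertex, which is why the gadget is a pentagon rather than a shorter cycle.) If you want to keep a cherry-based construction, you would need to pair each cherry with a preassigned replacement structure on the \emph{same} leaves so that the swap is degree-neutral off of $v$; that is precisely what the $C_5$ decomposition accomplishes.
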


\begin{proof}
    Note that $C_5$ can be decomposed into $2K_2$ and $P_3\sqcup K_2$ and the degree sequence of those two graphs differ only at one vertex by two. If we put $P_3\sqcup K_2$ into $S$ and $2K_2$ into $R$, then 
    deleting $P_3\sqcup K_2$ from $S$ and adding $2K_2$ will lower the degree of the vertrex by two while keeping the rest of the degree same.
    We utilize this simple fact to prove the lemma, by supplying enough number of such $C_5$s at every vertex.
 The following claim will be useful for collecting many $C_5$s at each vertex.

    \begin{claim}\label{clm:v-pentagons}
        Let $G'=G\setminus F$ for a subgraph $F$ of $G$ with $\Delta(F) \leq n^{1/3}$.
        Let $X \subseteq V(G)$ be a vertex set of size at least $\frac{n}{2}$ and $v\in V(G)$ be a vertex having at least $\frac{1}{5} pn$ neighbors in $X$ in the graph $G'$.
        Then $G'[X\cup \{v\}]$ contains at least $\frac{1}{100}pn$ copies of $C_5$s where every two copies of $C_5$ shares exactly $\{v\}$.
    \end{claim}

    \begin{claimproof}
        Note that \Cref{lem:new-bijumbledness} together with the fact $\Delta(F) \leq n^{1/3}$ implies that $G'$ is a $(p, 2n^{1/3}\log n)$-bijumbled triangle-free graph.
        Consider a maximum collection $\mathcal{C}$ of $C_5$s as desired, and let $X'\subseteq X$ be the set of vertices in $X$ which does not belong to any copy of $C_5$ in $\mathcal{C}$.

        Suppose $|\mathcal{C}| < \frac{1}{100} pn $. 
        Then $X'$ contains at least $n/2 - \frac{4}{100} pn  \geq n/4$ vertices and $v$ has at least $pn  - \frac{4}{100} pn  \geq \frac{1}{6}pn$ neighbors in $X'$.
        For a subset $U = N_{G'}(v)\cap X'$ of size exactly $\frac{1}{6}pn$, consider $X'\setminus N_{G'}(U)$. Then we have $|X'\setminus N_{G'}(U)|< n/8$ as otherwise we can take a subset $U'\subseteq X'\setminus N_{G'}(U)$ of size exactly $n/8$ satisfying
        \begin{align*} 
            0 &= e_{G'}(U,U') \geq p|U||U'| - 2 n^{1/3}\log n \cdot \sqrt{|U||U'|} \\
            &= \frac{1}{48}p^2 n^2   - 
            \frac{1}{2\sqrt{3}} p^{1/2} n^{4/3} \log n \geq \frac{1}{48} n^{4/3} \log^{-2} n - \frac{1}{2\sqrt{3}} n^{7/6} \log^{2} n >0,
        \end{align*}
        a contradiction.
    
        Hence $N_{G'}(U)$ has size at least $n/4-n/8\geq n/8$, bijumbledness implies that $G'$ contains an edge in $N_{G'}(U)$. As $G'=G\setminus F$ is triangle-free, this edge corresponds to a copy of $C_5$ containing $v$ and four vertices in $X'\subseteq X$. As the vertices $X'$ is not in any copy of $C_5$ in $\mathcal{C}$, this contradicts the maximality of $\mathcal{C}$, hence proves $|\mathcal{C}| \geq \frac{1}{100} pn$. This proves the claim. 
    \end{claimproof}

    We now wish to consider a collection of $C_5$s which supplies enough copies of $C_5$ for each vertex, while their union forms a graph with not too large maximum degree.
    Let $\mathcal{C}_0$ be the collection that minimizes $y$ among the collections of edge-disjoint copies of $C_5$ in $G$ that satisfies the following where $F_0:= \bigcup_{C\in \mathcal{C}_0}C$ is the union of all graphs in $C$ and $y$ is the number of vertices $v\in V(G)$ with $d_{F_0}(v) < 20 n^{1/10}$.

    \begin{enumerate}[label=\rm (C\arabic*)]
        \item\label{C1} $|E(F_0)|\leq 50 n^{1/10}(n-y)$.
        \item\label{C2} $\Delta(F_0)\leq 300 n^{1/10}$.
    \end{enumerate}
    Indeed, $\mathcal{C}_0=\emptyset$ satisfies \ref{C1} and \ref{C2}, so such a choice exists.  
    Fix such a choice $\mathcal{C}_0$ minimizing $y$.
    By \ref{C2} and \Cref{lem:new-bijumbledness}, $G'=G\setminus F_0$ is $(p,2n^{1/3}\log n)$-bijumbled.
    Let $X$ be the set of vertices $v$ with $d_{F_0}(v)\leq  300 n^{1/10} - 2$.
    Then we have $|X|\geq \frac{4}{5}n$ as  
    $$ n-|X|\leq \frac{|E(F_0)|}{300 n^{1/10}-1}\leq \frac{1}{5}n.$$
    We now claim that $y\leq n^{1/3} \log^5 n$ holds. 
    Otherwise, we let $Y$ be a set of exactly $n^{1/3} \log^5 n$ vertices $v$ with $d_{F_0}(v)< 20 n^{1/10}$.
    Then we have 
    \begin{align*}
        e_{G'}(X,Y) \geq p|X||Y| - 2n^{1/3}\log n \cdot \sqrt{|X||Y|} \geq (p|X| - 2 n^{2/3} \log^{-3/2} n )|Y| \geq \frac{1}{2} p|X| |Y|,
    \end{align*}
    as $p> n^{1/3} \log^{-1}n$ and $|X|\geq \frac{4}{5} n$.
    This shows that $Y$ contains a vertex $u$ having at least $\frac{1}{2}pn$ neighbors in $X$.
    By \Cref{clm:v-pentagons}, $G'[X\cup \{v\}]$ contains $10 n^{1/10} \leq \frac{1}{100} pn$ copies of $C_5$ where they pairwisely share only $v$.
    By adding them to $\mathcal{C}_0$, we obtain a new collection of $C_5$ with a smaller value of $y$.
    Moreover, as this addition increases the number of edges in $F$ by $5\times 10n^{1/10}$, this new collection satisfies \ref{C1}, and the choice of $X$ ensures that it also satisfies \ref{C2}. As this is a contradiction to the minimality of $y$ and $\mathcal{C}_0$, we have $y=|Y|<n^{1/3} \log^{5}n$.

    Now, we consider another collection $\mathcal{C}_1$ that minimizes $y'$ among the collections of edge-disjoint copies of $C_5$ in $G'=G\setminus F_0$ that satisfies the following where $F_1:= \bigcup_{C\in \mathcal{C}_1}C$ is the union of all graphs in $\mathcal{C}_1$ and $y'$ is the number of vertices $v\in Y$ with $d_{F_1}(v) < 20 n^{1/10}$.

    \begin{enumerate}[label=\rm (C$'$\arabic*)]
        \item\label{C'1} $|E(F_1)|\leq 50 n^{1/10}(|Y|-y')$.
        \item\label{C'2} $\Delta(F_1)\leq 300 n^{1/10}$.
    \end{enumerate}
    Indeed, $\mathcal{C}_1=\emptyset$ satisfies \ref{C'1} and \ref{C'2}, so such a choice exists. With such a choice, we claim $y'=0$. If not, we have a vertex $v\in Y$ with $d_{F_1}(v)< 20n^{1/10}.$
    Let $Z$ be the set of vertices $u$ with $d_{F_1}(u)\leq 300 n^{1/10}-2$. Then we have 
    $$n - |Z| \leq \frac{|E(F_1)|}{ 300 n^{1/10}-1}\leq \frac{25 n^{1/3+ 1/10} \log^5 n }{300 n^{1/10}-1} < \frac{1}{10} pn.$$
    Thus, $v$ has at least $\delta(G)- \frac{1}{10}pn \geq \frac{1}{2} pn$ neghbors in $Z$ and \Cref{clm:v-pentagons} implies that $G[Z\cup \{v\}]$ contains at least $10 n^{1/10}$ edge-disjoint copies of $C_5$s pairwisely sharing only $v$. By adding these copies to $\mathcal{C}_1$, we obtain a new collection with a smaller value of $y'$.
    As this adds at most $50 n^{1/10}$ edges, the new collection satisfies \ref{C'1}, and the definition of $Z$ ensures that it also satisfies \ref{C'2}.
    This proves $y'=0$.

    Now, we take $\mathcal{C}=\mathcal{C}_0\cup \mathcal{C}_1$ and $F=F_0\cup F_1$ then \ref{C2}, \ref{C'2} and the fact that $y'=0$ implies that it satisfies the following.
    \begin{enumerate}[label=\rm (P\arabic*)]
        \item\label{P1} For each $v\in v(G)$,  $d_{F}(v) \geq 20 n^{1/10}$.
        \item\label{P2} $\Delta(F)\leq 600 n^{1/10}$.
    \end{enumerate}

    Now, for each copy $C\in \mathcal{C}$ of $C_5$, we choose a vertex $g(C)$ in $V(C)$ uniformly at random, and partition $\mathcal{C}$ into the sets $\{\mathcal{C}_v: v\in V(G)\}$ with
    $$\mathcal{C}_v = \{ C : g(C)=v \}.$$
    As $\mathbb{E}[|\mathcal{C}_v|] \geq \frac{1}{5}\cdot \frac{1}{2}d_F(v) \geq 2 n^{1/10}$, \Cref{lem:chernoff} with the union bound implies that 
    $|\mathcal{C}_v|> n^{1/10}$ holds for all $v\in V(G)$ with probability at least $1- n e^{n^{1/10}/100} >0$.
    Hence we can fix one such partition satisfying $|\mathcal{C}_v|> n^{1/10}$ for all $v\in V(G)$.

    For each $C\in \mathcal{C}$, we let $R_C$ be the copy of $2K_2$ avoiding the vertex $g(C)$ and $S_C$ be the subgraph consisting of the remaining three edges in $C$.
    Let $$R=\bigcup_{C\in \mathcal{C}} R_C \text{ and } S=\bigcup_{C\in \mathcal{C}} S_C.$$
    With this choice, as $F=R\cup S$, \ref{P2} implies that \ref{RS1} holds.
    Also, for each $v\in V(G)$, we have $d_{S}(v) \geq 2|\mathcal{C}_v| \geq 2n^{1/10}$, thus \ref{RS2} holds as well.
    Finally, for a given function $f:V(G)\rightarrow [n^{1/10}]$, we arbitrarily partition each $\mathcal{C}_v$ into 
    $\mathcal{C}_v^0$ and $\mathcal{C}_v^1$ with $|\mathcal{C}_v^1|= f(v)$. 
    This is possible as $|\mathcal{C}_v|\geq n^{1/10}$.
    Then the graph 
    $$H = \bigcup_{v\in V(G)}\bigcup_{C\in \mathcal{C}_v^0} S_C \cup \bigcup_{v\in V(G)}\bigcup_{C\in \mathcal{C}_v^1} R_C $$
    satisfies \ref{RS3}.
    Indeed, for all $u\neq v \in V(G)$ with $C\in \mathcal{C}_u$, switching $S_C$ wiht $R_C$ does not affect the degree of $v$. On the other hand, for each $C\in \mathcal{C}_v$, switching $S_C$ with $R_C$ lowers the degree of $v$ by two. Hence we have 
    $d_H(v) = d_S(v) - 2 |\mathcal{C}_v^1| = d_S(v)-2f(v).$
    This verifies \ref{RS3} and proves the lemma.
\end{proof}


\section{Proof of \Cref{thm:main}}\label{sec:proof}

In this section, we prove our main theorem.

\begin{proof}[Proof of \Cref{thm:main}]
For a given sufficiently large $n$, let $k$ be the smallest integer not divisible by $3$ with $n \leq 2^{3k}$. Let $N = 2^{3k}$, then we have 
    \begin{equation*}\label{eq:sandwich}
        \frac{1}{64}N < n \leq N.    
    \end{equation*}

    By \Cref{thm:alon}, there exists a triangle-free $(N,D,\lambda)$-graph $A$ with $D = 2^{k-1}(2^{k-1} - 1)$ and $\lambda \leq 10\cdot 2^{k} \leq 10 N^{1/3}.$ 
    We choose a set $V$ of $n$ vertices in $V(A)$ uniformly at random, and let $G:= A[V]$ be the subgraph induced by the set $V$. 

    Let $p=\frac{D}{N}$, then we have $\frac{1}{18} n^{-1/3}\leq p\leq \frac{1}{4} n^{-1/3}$.
    For each $v\in V$, the degree $d_{G}(v)$ follows hypergeometric distribution, \Cref{lem:chernoff} together with the union bound yields that with probability at least $1- n e^{-2\log n}>0$ all vertices of $G$ had degree $pn \pm 10 n^{1/3}\log^{1/2} n.$
    Fix such a random set $V$ and the graph $G=A[V]$.
    As we wish to obtain a regular subgraph, we plan to find a subgraph $F$ of $G$ which makes $G\setminus F$ regular.

    As $\lambda \leq 10 n^{1/3}$, the expander mixing lemma implies that $A$ is $(p,10 n^{1/3})$-bijumbled, its induced subgraph $G$ is also $(p,10 n^{1/3})$-bijumbled.
    We apply \Cref{lem:sponge-graph} to obtain two edge-disjoint subgraphs $R,S$ of $G$ satisfiying \ref{RS1}--\ref{RS3}. 
    Let $G_0 = G-(R\cup S)$, then \ref{RS1} implies that all vertices of $v$ had degree  $pn \pm 11 n^{1/3}\log^{1/2} n$ in $G_0$.
    Let 
    $$\delta = pn - 11 n^{1/3} \log^{1/2} n.$$
    In $G_0$, if two adjacent vertices $u, v$ have both degrees bigger than $\delta - d_S(u)$ and $\delta - d_S(v)$, respectively, we delete the edge $uv$. Repeat this until no such vertices exist, then we obtain a subgraph $G_1\subseteq G_0$ satisfying that 
    $\delta(G_1\cup S) \geq \delta$ and $W:= \{ v: d_{G_1\cup S}(v)> \delta\}$ is an independent set.
    For each $v\in W$, let $h(v) = d_{G_1\cup S}(v) - \delta$, then we have $h(v)\leq 22 n^{1/3} \log^{1/2}n$ by the definition of $\delta$.
    As $\Delta(G\setminus G_1)\leq 22 n^{1/3} \log^{1/2} n$, \Cref{lem:new-bijumbledness} ensures that $G_1$ is $(p,30n^{1/3}\log^{1/2} n)$-bijumbled, \Cref{lem: Hall} implies that there exists a subgraph $F$ of $G_1$ that satisfying the following.

    \begin{equation*}
        \text{$d_F(x)=h(x)$ for each $x\in W$, and  $d_F(y)\leq \log^{10}n$ for each $x\notin W$.}
    \end{equation*}
    The definition of $h$ implies that $G_2= G_1\setminus F$ satisfies $$\delta - \log^{10} n \leq \delta(G_2\cup S) \leq \delta.$$
    Again, $G_2$ is $(p,35n^{1/3}\log^{1/2} n)$-bijumbled as $\Delta(G\setminus G_2)\leq 35 n^{1/3} \log^{1/2} n$, \Cref{lem: spanning tree} ensures that $G_2$ contains a spanning tree $T$ with maximum degree at most $10$.
    By a result of~\cite{itai1977finding}, such a tree $T$ contains a parity subgraph $T'\subseteq T$, where $d_{T'}(v)$ and $d_{G_2\cup S}(v)$ has the same parity and $\Delta(T')\leq \Delta(T)\leq 10$.
    Then the graph $G^* = G_2\setminus T'$ disjoint from $R$ and $S$ satisfies $$\delta - \log^{10} n -10 \leq \delta(G^*\cup S) \leq \delta$$ and all vertices in $v$ has even degree in $G^*\cup S$.
    Let $d'$ be the largest even number less than $\delta - \log^{10} n -10$ and for each $v\in V(G)$, we let $f(v) =\frac{1}{2}( d_{G^*\cup S}(v) - d') \leq \log^{10} n + 12$.
    Then \ref{RS3} implies that there exists a subgraph $H\subseteq R\cup S$ disjoint from $G^*$ with $d_{H}(v) = d_S(v)- 2f(v)$.
    Then the final graph $G^*\cup H$ satisfies $$d_{G^*\cup H}(v) = d_{G^*\cup S}(v) - d_{S}(v)+ d_{H}(v) = d_{G^*\cup S}(v) - 2f(v) = d'$$
    for all $v\in V(G)$. Hence it is a $d'$-regular subgraph of $A[X]$. 
    As $d' \geq  pn - 20 n^{1/3} \log^{1/2}n$, we have $$\Delta(A[X]\setminus (G^*\cup H)) \leq 40 n^{1/3}\log^{1/2}n \text{ and } pn \geq \frac{1}{18} n^{2/3}.$$
    Thus, \Cref{lem: obtain lambda} implies that $G^*\cup H$ is an $(n, d', \lambda')$-graph with $d' \geq  pn - 20 n^{1/3} \log^{1/2}n \geq \frac{1}{20}n^{2/3}$ and $\lambda'\leq 41 n^{1/3} \log^{1/2} n$. This proves the theorem.
\end{proof}


\section*{Acknowledgement}
JK and HL are supported by the National Research Foundation of Korea (NRF) grant funded by the Korean government(MSIT) No. RS-2023-00210430. 
HL is supported by the Institute for Basic Science (IBS-R029-C4).


\printbibliography

\end{document}